\newtheorem{theorem}{Theorem}[section]
\newtheorem{lemma}[theorem]{Lemma}
\newtheorem{property}[theorem]{Property}
\newtheorem{proposition}[theorem]{Proposition}
\theoremstyle{definition}
\newtheorem{question}[theorem]{Question}
\theoremstyle{remark}
\numberwithin{equation}{section}
\def\Mod{\mbox{\rm{Mod}}}
\def\ML{{\mathcal {ML}}}
\def\PML{{\mathbb P}{\mathcal {ML}}}
\newcommand{\Homeo}{{\text{Homeo}}}
\newcommand{\co}{\colon\thinspace}
\begin{document}

\title{A fake Schottky group in $\mathrm{\mathbf{Mod}}(S)$}

\author{Richard P. Kent IV}
\address{Department of Mathematics, Brown University, Providence, RI 02912}
\email{rkent@math.brown.edu}
\thanks{The first author was supported in part by an NSF postdoctoral fellowship.}

\author{Christopher J. Leininger}
\address{Department of Mathematics, University of Illinois, Urbana, IL 61801}
\email{clein@math.uiuc.edu}
\thanks{The second author was supported in part by NSF Grant DMS-0603881.}

\subjclass{Primary 20F65 ; Secondary 30F60, 57M07, 57M50}
\date{September 23, 2008}

\keywords{Schottky group, mapping class group}

\begin{abstract}
We use the classical construction of Schottky groups in hyperbolic geometry to produce non-Schottky subgroups of the mapping class group.
\end{abstract}

\maketitle
\markboth{RICHARD P. KENT IV AND CHRISTOPHER J. LEININGER}{A FAKE SCHOTTKY GROUP IN $\mathrm{Mod}(S)$}

\section{Introduction}
In hyperbolic geometry, a Schottky group is a free convex cocompact Kleinian group, classically constructed as follows.  
Pick four pairwise disjoint closed balls $B_1^-$, $B_2^-$, $B_1^+$, $B_2^+$  in $S^{n-1}_\infty$, the ideal boundary of hyperbolic $n$--space.  
Suppose there are isometries $f_1$ and $f_2$ so that 
\[ 
f_i(B_i^-) = \overline{S^{n-1}_\infty - B_i^+}. 
\]
Then $\langle f_1, f_2 \rangle$ is a Schottky group isomorphic to the free group $F_2$ of rank two.

Now let $S$ be a closed surface of genus $g \geq 2$ and let $\Mod(S) = \pi_0(\Homeo^+(S))$ be its mapping class group.  
By way of analogy with the theory of Kleinian groups, B. Farb and L. Mosher defined \cite{FM} a notion of convex cocompactness for subgroups of $\Mod(S)$.
In this setting,  a \textit{Schottky group} is a free convex cocompact subgroup of $\Mod(S)$.  In \cite{KL1,KL2}, we extended Farb and Mosher's analogy, providing several characterizations of convex cocompactness borrowed from the Kleinian setting (see also Hamenst\"adt \cite{H}).  
The analogy is an imperfect one, see \cite{KL3} and the references there, and we point out some new imperfections here.

\begin{theorem} \label{funny example1}
There exist pseudo-Anosov elements $f_1$ and $f_2$ in $\Mod(S)$ and pairwise disjoint closed balls $B_1^-, B_2^-,B_1^+, B_2^+$ in $\PML(S)$ for which
\[ 
f_i(B_i^-) = \overline{\PML(S) - B_i^+}
\]
and yet $\langle f_1, f_2 \rangle \cong F_2$ is not a Schottky group.
\end{theorem}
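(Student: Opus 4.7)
\emph{Proof plan.} The free-group conclusion $\langle f_1, f_2 \rangle \cong F_2$ is immediate from the Klein ping-pong lemma applied in $\PML(S)$: the four pairwise disjoint closed balls together with the relations $f_i(B_i^-) = \overline{\PML(S) - B_i^+}$ guarantee that any non-trivial reduced word in $f_1^{\pm 1}, f_2^{\pm 1}$ carries some point of $\PML(S)$ outside of its initial ball, and so acts non-trivially on $\PML(S)$.

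To show that the group is not a Schottky group, I would invoke the characterization of convex cocompactness from \cite{FM, KL1, KL2}: every infinite-order element of a convex cocompact subgroup of $\Mod(S)$ is pseudo-Anosov. Thus it suffices to exhibit a single reducible element in $\langle f_1, f_2\rangle$. The entire theorem then reduces to the following construction problem: produce pseudo-Anosov $f_1, f_2$ satisfying the Schottky ball condition such that \emph{some} word in them is reducible.

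For the construction I would try to arrange a very short word to be a power of a Dehn twist. A natural first attempt: fix a pseudo-Anosov $\phi$ and a simple closed curve $\alpha$ whose projective class is distinct from the stable and unstable foliations of $\phi$ in $\PML(S)$, and set $f_2 := \phi^n$ and $f_1 := T_\alpha^m \phi^n$. For all sufficiently large exponents, Thurston's construction and a direct Nielsen--Thurston analysis should show both $f_i$ are pseudo-Anosov with dilation factors growing without bound, and by construction $f_1 f_2^{-1} = T_\alpha^m$ is reducible, so that $\langle f_1, f_2\rangle$ is not Schottky.

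The hard part, and the real content of the theorem, is then verifying the Schottky ball condition for the specific $f_1, f_2$ so constructed. The north--south dynamics of a pseudo-Anosov on $\PML(S)$ says that any closed ball $B_i^-$ around the repelling fixed point $F_i^-$ is carried by $f_i$ to a closed set whose complement is an open ball around the attracting fixed point $F_i^+$; defining $B_i^+$ as the closure of this open complement makes the equation $f_i(B_i^-) = \overline{\PML(S) - B_i^+}$ hold tautologically. Pairwise disjointness of the four balls then requires (a) the four fixed points $F_1^\pm, F_2^\pm$ to be distinct and well-separated in $\PML(S)$, and (b) the dilations of the $f_i$ to be large enough that the balls $B_i^\pm$ produced by the above procedure all have small diameter. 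I expect the main obstacle to be choosing $\alpha$, $m$, and $n$ simultaneously so as to spread the fixed points, make the dynamics sufficiently contracting, and preserve the reducible element $T_\alpha^m$; the interplay between the algebraic identity $f_1 f_2^{-1} = T_\alpha^m$ and the dynamical Schottky condition is what ties the construction together.
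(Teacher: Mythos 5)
Your skeleton agrees with the paper's: freeness by ping--pong, and non-Schottkyness by exhibiting a reducible element, since convex cocompact subgroups contain no reducible elements. The fatal problem is your choice of reducible word. Suppose $f_1,f_2$ and pairwise disjoint closed balls satisfy $f_i(B_i^-)=\overline{\PML(S)-B_i^+}$. Then $f_i^{-1}$ carries $\overline{\PML(S)-B_i^+}$ onto $B_i^-$, and $f_i$ carries the complement of $B_i^-$ into $\mathrm{int}(B_i^+)$. Hence $w=f_1f_2^{-1}$ carries $\overline{\PML(S)-B_2^+}$ first into $B_2^-$ (which is disjoint from $B_1^-$) and then into $\mathrm{int}(B_1^+)$; symmetrically $w^{-1}$ carries $\overline{\PML(S)-B_1^+}$ into $\mathrm{int}(B_2^+)$. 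Two consequences: every fixed point of $w$ lies in $B_1^+\cup B_2^+$, and $w(B_1^+)\subset \mathrm{int}(B_1^+)$, $w^{-1}(B_2^+)\subset\mathrm{int}(B_2^+)$. Now take $w=T_\alpha^m$ as you propose. Its fixed set contains $Z\alpha=\{[\nu] : i(\nu,\alpha)=0\}$, which is connected (each $[\nu]\in Z\alpha$ is joined to $[\alpha]$ by the segment $[s\nu+(1-s)\alpha]$), so $Z\alpha$ lies entirely in one of the two disjoint balls, say $Z\alpha\subset B_2^+$. Then every $[\nu]\in B_1^+$ has $i(\nu,\alpha)\neq 0$, so $T_\alpha^{mk}[\nu]\to[\alpha]\in B_2^+$; but $w^k[\nu]\in B_1^+$ for all $k$ and $B_1^+$ is closed and disjoint from $B_2^+$, a contradiction. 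The case $Z\alpha\subset B_1^+$ is symmetric using $w^{-1}$, whose iterates also converge to $[\alpha]$ off $Z\alpha$. So no choice of $\alpha$, $m$, $n$, or balls can realize $f_1f_2^{-1}=T_\alpha^m$: a power of a Dehn twist, whose fixed set is connected and attracts under both forward and backward iteration, can never be this word in a pair satisfying the hypotheses. (A secondary gap: even granting the algebra, large dilatation does not yield small ping--pong balls; the uniform contraction \eqref{ping} comes from taking high powers of a \emph{fixed} map, and raising $f_1,f_2$ to powers destroys the relation $f_1f_2^{-1}=T_\alpha^m$.)

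The paper circumvents exactly this obstruction by taking the reducible element to be a partial pseudo-Anosov $h=\phi^n$, where $\phi$ preserves a nonseparating curve $\alpha$ and is pseudo-Anosov on $S-\alpha$. By Ivanov's appendix such an $h$ has source--sink dynamics on $\PML(S)$ whose sink is the single point $[\mu_h^+]$ and whose source is the simplex $Z\mu_h^-$; these are \emph{disjoint}, the fixed set is disconnected, and the argument above no longer applies. One builds the four-ball picture for $\langle f,h\rangle$ with $f$ a genuine pseudo-Anosov chosen so that $fh$ is also pseudo-Anosov, and only then changes generators to $f_1=fh$, $f_2=f$, constructing new balls $B_1^\pm,B_2^\pm$ for these generators by hand. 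If you want to salvage your approach, you must replace $T_\alpha^m$ by a reducible element of this partial pseudo-Anosov type.
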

The construction is based on work of N. Ivanov, and it is clear from his work in \cite{I} that he was aware of this construction (see also McCarthy \cite{Mc}).
The group $G = \langle f_1,f_2 \rangle$ contains reducible elements and so fails to be convex cocompact.
It is worth noting that there are sufficiently high powers of the $f_i$ that generate a Schottky group, as proven by Farb and Mosher \cite{FM}, see also \cite{KL1,H}.

Part of the analogy between Kleinian groups and mapping class groups was developed by J. McCarthy and A. Papadopoulos \cite{MP}, who constructed a limit set $\Lambda_G$ and domain of discontinuity $\Delta_G \subset \PML(S) - \Lambda_G$ for any subgroup $G < \Mod(S)$, see Section \ref{propersection}. 
Unlike in the Kleinian setting,  $\Delta_G \neq \PML(S) - \Lambda_G$ in general.  
While examples illustrate the necessity of taking an open set strictly smaller than $\PML(S) - \Lambda_G$ as a domain of discontinuity, it is not clear that $\Delta_G$ is an optimal choice. 
In \cite{KL1}, we asked whether or not $\Delta_G$ is the largest open set on which $G$ acts properly discontinuously---see Question 3 there.
Here, we answer this in the negative.

There is an obvious open set on which our group $G = \langle f_1, f_2 \rangle$ acts properly discontinuously and cocompactly, namely
\[
\Omega = \bigcup_{g \in G} \ g \cdot \Theta 
\]
where $\Theta$ is the closure of the complement of our four balls.
To see that $\Omega$ is open, note that $\Theta$ is contained in the interior $\mathcal{U}$ of 
\[
f_1(\Theta) \cup f_1^{-1}(\Theta) \cup f_2(\Theta) \cup f_2^{-1}(\Theta)
\]
and that
\[
\Omega = \bigcup_{g \in G} \ g \cdot \mathcal{U}
\]
If the $f_i$ are chosen carefully, the set $\Omega$ will contain $\Delta_G$ properly, and we have the following theorem.
\begin{theorem} \label{pdexample}
There are irreducible subgroups $G < \Mod(S)$ for which $\Delta_G$ is \textnormal{not} the largest open set on which $G$ acts properly discontinuously.
\end{theorem}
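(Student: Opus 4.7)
The plan is to apply Theorem~\ref{funny example1} and exhibit a single point of $\Omega \setminus \Delta_G$; this suffices because the introduction already shows that $G$ acts properly discontinuously on the open set $\Omega = \bigcup_{g \in G} g \cdot \Theta$. Since $f_1, f_2$ are pseudo-Anosov, neither preserves any nonempty system of disjoint essential simple closed curves, and so $G = \langle f_1, f_2\rangle$ is automatically irreducible; the substance of the theorem is thus the strict containment $\Delta_G \subsetneq \Omega$.

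As the discussion after Theorem~\ref{funny example1} explains, $G$ contains a reducible element $h$. Fix such an $h$ and let $\gamma$ be a component of its canonical reducing multicurve; after replacing $h$ by a suitable power we may assume $h$ fixes $\gamma$ as a point of $\PML(S)$. The general theory of limit sets of subgroups of $\Mod(S)$ developed by McCarthy and Papadopoulos then gives $\gamma \in \Lambda_G$. Moreover, since $G$ acts properly discontinuously on $\Omega$ while $h^n$ fixes $\gamma$ for every $n$, the point $\gamma$ cannot lie in $\Omega$, and so must lie inside one of the four balls $B_i^\pm$.

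The candidate point of $\Omega \setminus \Delta_G$ will be a simple closed curve $\delta$ disjoint from $\gamma$ that happens to lie in $\Theta$. For such a $\delta$ one has $\delta \in \Theta \subset \Omega$, while $i(\delta,\gamma) = 0$ combined with $\gamma \in \Lambda_G$ forces $\delta \notin \Delta_G$. The main obstacle is securing the existence of such a $\delta$: we must verify that the construction of Theorem~\ref{funny example1} admits generators $f_1, f_2$ whose four balls are small enough (or positioned appropriately) to leave some simple closed curve disjoint from $\gamma$ in $\Theta$. To arrange this I would return to the Ivanov-style construction used to prove Theorem~\ref{funny example1}, either replacing the $f_i$ by sufficiently high powers (which shrinks the balls toward the fixed laminations of $f_1, f_2$) or adjusting the generators directly, and then verify that at least one simple closed curve disjoint from the reducing curve of the resulting reducible element survives outside all four balls.
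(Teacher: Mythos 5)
Your reduction of the theorem to exhibiting a single point of $\Omega \setminus \Delta_G$ is sound, and the observations that $G$ is irreducible (it contains a pseudo-Anosov) and that $\Omega$ is an open set on which $G$ acts properly discontinuously are fine. The gap is in the choice of witness, and it traces back to one false claim: the reducing curve $\gamma = \alpha$ does \emph{not} lie in $\Lambda_G$. For these groups the limit set is $\mathfrak{I}(\partial G)$, whose points are either translates $g[\mu^\pm_h]$ (laminations filling $S - g\alpha$) or uniquely ergodic filling laminations; the curve $\alpha$ lies in the zero locus $Z\Lambda_G$ (because $i(\alpha,\mu^+_h)=0$) but not in $\Lambda_G$ itself. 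Once that is corrected, the inference ``$i(\delta,\gamma)=0$ and $\gamma\in\Lambda_G$, hence $\delta\notin\Delta_G$'' collapses, and membership in $Z\Lambda_G$ is not transitive through $\gamma$. Worse, no simple closed curve can serve as your witness: a curve $\delta$ lies in $Z\Lambda_G$ only if it has zero intersection with some $g\mu^\pm_h$, which forces $\delta = g\alpha$; and by Theorem \ref{pdexample2} every $g\alpha \in gZ\mu^-_h$ lies \emph{outside} $\Omega$. So every simple closed curve in $\Theta$ is in fact already in $\Delta_G$, and shrinking the balls or raising powers cannot change this.

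The correct witness is a genuinely mixed measured lamination rather than a curve. The interval $Z\mu^+_h = \{[s\mu^+_h + (1-s)\alpha] : s\in[0,1]\}$ joins $\alpha \in B^-_h$ to $[\mu^+_h] \in \mathrm{int}\,B^+_h$, so the arc $\mathbb{J}_h = Z\mu^-_h \cup Z\mu^+_h$, which runs from $[\mu^-_h]$ inside $B^-_h$ to $[\mu^+_h]$ inside $B^+_h$, must cross $\Sigma^-_h = \partial B^-_h \subset \Theta$. A crossing point $[s\mu^+_h+(1-s)\alpha]$ with $0<s<1$ satisfies $i(s\mu^+_h+(1-s)\alpha,\ \mu^+_h)=0$ with $[\mu^+_h]\in\Lambda_G$, hence lies in $Z\Lambda_G = \PML(S)-\Delta_G$, while lying in $\Theta\subset\Omega$. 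This is exactly the paper's one-line verification that $\Delta_G\neq\Omega$ (the paper then goes further and proves $\Delta_G\subset\Omega$, which your argument does not need for the statement as phrased). Your proposal as written does not reach a valid witness.
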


Asymmetry of the construction provides another domain $\Omega'$ on which $G$ acts properly discontinuously, and we will also show that $G$ does not act properly discontinuously on the union $\Omega \cup \Omega'$.

Though $\Delta_G$ is not a maximal domain of discontinuity, we show in Section \ref{finalcomments} that, for the groups in Theorem \ref{pdexample}, it is nonetheless the intersection of all such maximal domains.

%%%%%%%%%%%%%%%%%%%%%%%%%%%%%%%%%
\section{Surface dynamics} \label{prelimsection}
%%%%%%%%%%%%%%%%%%%%%%%%%%%%%%%%%
If $\mathcal X$ is a subset of $\PML(S)$, we let
\[
Z \mathcal X= \{ [\nu] \in \PML(S) \, | \, i(\nu,\mu) = 0 \mbox{ for some } [\mu] \in \mathcal X \}
\]
be the \textit{zero locus of $\mathcal{X}$}.
If $\mathcal X =\{ [x] \}$ we sometimes write $Z x$ for $Z \mathcal X$.

If $f$ is pseudo-Anosov, then it acts with \textit{north--south} dynamics on $\PML(S)$, meaning
that it has unique attracting and repelling fixed points $[\mu^+_f]$ and $[\mu^-_f]$, respectively---all other points are attracted to $[\mu^+_f]$ under iteration of $f$. In fact, for any neighborhood $U$ of $[\mu^+_f]$ and any
compact set $K \subset \PML(S) - \{[\mu^-_f]\}$, there is a natural number $N$ so that
\begin{equation} \label{ping}
f^n(K) \subset U
\end{equation}
for any $n \geq N$.

Ivanov proves that there is a similar situation for most pure reducible elements (see the Appendix of \cite{I}).  
In particular, suppose $\alpha$ is a nonseparating simple closed curve in $S$ preserved by a mapping class $\phi$ that is pseudo-Anosov when restricted to $S - \alpha$.  
Let $[\mu^+_\phi]$ and $[\mu^-_\phi]$ be the stable and unstable laminations for $\phi$ in $S - \alpha$ considered laminations on $S$,
and note that 
\[
Z\mu^-_\phi = \big\{ [s\mu^-_\phi + (1-s)\alpha] \in \PML(S) \, | \, s \in [0,1] \big\}. 
\]
If $K \subset \PML(S) - Z\mu^-_\phi $ is a compact set and $U$ a neighborhood of $[\mu^+_\phi]$, then there is an $N > 0$ such that for all $n \geq N$ we have
\begin{equation} \label{pong}
\phi^n(K) \subset U.
\end{equation}

Given a mapping class $g$ of either type above,
%of $G$, we let $\mu_g^{\pm}$ denote a representative of $[\mu_g^{\pm}]$.
let $\lambda(g)$ denote the \textit{expansion factor} of $g$, the number such that
\[
g(\mu_g^+) = \lambda(g) \mu_g^+.
\]

%%%%%%%%%%%%%%%%%%%%%%%%%%%%%%%%%%%%%%%
\section{The construction}
%%%%%%%%%%%%%%%%%%%%%%%%%%%%%%%%%%%%%%%

Let $\alpha$ be a nonseparating curve fixed by a mapping class $\phi$ that is pseudo-Anosov on $S - \alpha$, and let $[\mu^+_{\phi}]$ and $[\mu^-_{\phi}]$ be as in the previous section.  

Let $\mathbb{S}_\phi \subset \PML(S)$ be a bicollared $(6g-8)$--dimensional sphere  dividing $\PML(S)$ into two closed balls $\mathcal A_{\phi}$ and $\mathcal B_{\phi}$ containing $Z\mu^-_{\phi}$ and $[\mu^+_\phi]$, respectively.

According to \eqref{pong}, there is an $N > 0$ so that for all $n \geq N$ we have
\[ 
\phi^n(\mathcal B_{\phi}) \subset \mathrm{int}(\mathcal B_{\phi}). 
\]
So we choose an $n \geq N$, let $h = \phi^n$, $B^-_h = \mathcal A_{\phi}$, and $B^+_h = h(\mathcal B_{\phi})$.

Recall H.~Masur's theorem \cite{Ma} that the set
\[
\{([\mu^+_\psi],[\mu^-_\psi]) \, | \, \psi \in \Mod(S) \mbox{ pseudo-Anosov} \}
\]
 is dense in $\PML(S) \times \PML(S)$.
So we choose a pseudo-Anosov $\psi$ whose fixed points $[\mu^+_\psi]$ and $[\mu^-_\psi]$ lie in $\PML(S) - (B^-_h \cup B^+_h)$.
We let $\mathbb{S}_\psi \subset \PML(S)- (B^-_h \cup B^+_h)$ be a bicollared $(6g-8)$--sphere which bounds two balls:  $\mathcal A_\psi \subset \PML(S) - (B^-_h \cup B^+_h)$ containing $[\mu^-_\psi]$ and $\mathcal B_\psi$ containing $[\mu^+_\psi]$.  
As $\PML(S) - (B^-_h \cup B^+_h)$ is a neighborhood of $[\mu^+_\psi]$, \eqref{ping} provides an $M > 0$ such that for all $m \geq M$, we have $\psi^m(\mathcal B_\psi) \subset \PML(S) - (B^-_h \cup B^+_h)$.  
Arguing as in \cite{I}, we may choose $m$ so that $\psi^mh$ is pseudo-Anosov, and we do so.  
We let $f = \psi^m$, $B^-_f = \mathcal A_\psi$, and $B^+_f = f(\mathcal B_\psi)$.
\begin{figure}
\begin{center}
\input{balls.pstex.tex}
\end{center}
\caption{}\label{schottky1}
\end{figure}

We now have elements $f$, $h$, and pairwise disjoint closed balls
\[
B^-_h,B^+_h,B^-_f,B^+_f
\]
with
\[
h(B^-_h) = \overline{\PML(S) - B^+_h} \mbox{ and } f(B^-_f) = \overline{\PML(S) - B^+_f}. 
\]
See Figure \ref{schottky1}.

Let $G = \langle f,h \rangle$, set
\[ 
\Theta = \overline{\PML(S) - \left( B^-_h \cup B^+_h \cup B^-_f \cup B^+_f \right)}. 
\]
and let
\[\Omega = \bigcup_{g \in G} g \cdot \Theta.\]
The group $G$ acts on $\Omega$ properly discontinuously and cocompactly with fundamental domain $\Theta$, and the usual ping--pong argument implies that $G \cong F_2$.

A slight modification now provides the desired example.  

We let $f_1 = fh$ and $f_2 = f$, both pseudo-Anosov by construction.  Of course, $G = \langle f_1,f_2 \rangle$, and we need only find balls $B_1^\pm$ and $B_2^\pm$ with
\begin{equation}\label{nest}
f_i(B_i^-) = \overline{\PML(S) - B_i^+} .
\end{equation}
Set $B_1^- = B^-_h$ and $B_1^+ = f(B^+_h)$.  
The ball $B_2^-$ is constructed as a regular neighborhood of $B^-_f \cup B^+_h \cup \delta$ in $\PML(S) - (B^-_h \cup B^+_f)$, where $\delta$ is an arc in $\Theta$ from $B^-_f$ to $B^+_h$.
The ball $B_2^+$ is defined to be $\overline{\PML(S) - f(B_2^-)}$.  See Figure \ref{schottky2}.

\begin{figure}
\begin{center}
\input{newballs.pstex.tex}
\end{center}
\caption{}\label{schottky2}
\end{figure}

One can now check \eqref{nest}.

%%%%%%%%%%%%%%%%%%%%%%%%%%%%%%%%%%%%%%%%%%%
\section{Proper discontinuity}\label{propersection}
%%%%%%%%%%%%%%%%%%%%%%%%%%%%%%%%%%%%%%%%%%%

Let $G = \langle h,f \rangle$ be the group constructed in the previous section, and let $\partial G$ be the Gromov boundary of $G$.
By the work in \cite{MP}, the limit set 
\[ 
\Lambda_G = \overline{ \{ [\mu^+_g] \in \PML(S) \, | \, g \in G \mbox{ is pseudo-Anosov} \}}
\]
is the unique minimal closed $G$--invariant subset of $\PML(S)$.  
In \cite{KL2} we showed that one may choose $h$ and $f$ as above so that $G$ has the following property.
\begin{property} \label{kluniform}
There exists a continuous $G$--equivariant homeomorphism
\[
\mathfrak{I}\co \partial G \to \Lambda_G. 
\]
Moreover, for each $x \in \partial G$ which is a fixed point of a conjugate $^gh$ of $h$, $\mathfrak{I}(x)$ is the stable or unstable lamination of that conjugate $^gh$ (respecting the dynamics). Otherwise $\mathfrak{I}(x)$ is a uniquely ergodic filling lamination.  In particular, every element $g \in G$ not conjugate to a power of $h$ is pseudo-Anosov.
\end{property}
We henceforth assume that $G$ satisfies Property \ref{kluniform}.

The domain of discontinuity is defined to be
\[ 
\Delta_G = \PML(S) - Z \Lambda_G .
\]
This is an open set on which $G$ acts properly discontinuously \cite{MP}, which justifies the name.

We may describe the zero locus $Z \Lambda_G$ for $G$ explicitly.
For each conjugate $^g h$ of $h$, we have the attracting and repelling fixed points $x^\pm_{^g\!h}$ in $\partial G$.  
By Property \ref{kluniform}, the map $\mathfrak{I}$ sends these to the stable and unstable laminations
\[ 
\mathfrak{I}(x^\pm_{^g\!h}) = [\mu^\pm_{^g\!h}] = g [\mu^\pm_h].
\]
For any such point $g [\mu^\pm_h] \in \Lambda_G$, the set $Z g\mu^\pm_h = gZ\mu^\pm_h$ is a $1$--simplex in $Z \Lambda_G$. 
Since $\mathfrak{I}(x)$ is uniquely ergodic and filling for every other point $x \in \partial G$, it follows that $Z \Lambda_G$ is the union of $\Lambda_G$ and all of these intervals.

The intervals $Z\mu^-_h$ and $Z\mu^+_h$ intersect each other at $\alpha$, and so the union 
\[
\mathbb{J}_h = Z\mu^-_h \cup Z\mu^+_h
\]
 is an interval joining $\mu^-_h$ to $\mu^+_h$.
All in all, we have
\begin{equation}\label{descriptionZ}
Z \Lambda_G = \Lambda_G \cup \bigcup_{g \in G}  g\, \mathbb{J}_h 
\end{equation}

We impose one further restriction on $h$ and $f$---more precisely, on the balls $B^\pm_f$.
Since the fixed points of $f$ do not meet the interval $\mathbb{J}_h$, we may replace $f$ with a power so that the balls $B^\pm_f$ are disjoint from this interval.  
This implies that
\[
\mathrm{int}\, Z \mu^+_h = Z \mu^+_h \cap \bigcup_{n\in \mathbb Z} h^n \Theta
\]
and so $Z \mu^+_h$ intersects the $h^n$ translates of $\Theta$, and no other $G$--translates.
As $Z \mu^-_h$ does not intersect $\Omega$, these are the only $G$--translates of $\Theta$ that $\mathbb{J}_h$ intersects.
Write $\Sigma^\pm_h = \partial B^\pm_h$ and $\Sigma^\pm_f= \partial B^\pm_f$.  

We claim that
\[
\Sigma^+_f \cap Z \Lambda_G = \emptyset.
\]
To see this, note that if $\Sigma^+_f$ nontrivially intersected $Z \Lambda_G$, it would do so in some $g \mathbb{J}_h$, by \eqref{descriptionZ}; and then $g$ must be a power of $h$, since $\Sigma^+_f$ lies in $\Theta$.
But  $h \mathbb{J}_h = \mathbb{J}_h$, and so  $\Sigma^+_f$ would intersect $\mathbb{J}_h$, contrary to our choice of $f$.  
The claim follows.

Now, Theorem \ref{pdexample} will follow from

\begin{theorem} \label{pdexample2}
The set $\Delta_G$ is properly contained in $\Omega$. In fact,
\[
\Omega = \PML(S) - \left( \Lambda_G \cup \bigcup_{g\in G} g Z\mu^-_h \right).
\]
\end{theorem}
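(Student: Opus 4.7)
I would prove the displayed equality by showing both inclusions; the first assertion of the theorem (that $\Omega$ strictly contains $\Delta_G$) then follows because interior points of $Z\mu^+_h$ lie in $\Omega$, by the identity $\mathrm{int}\,Z\mu^+_h = Z\mu^+_h \cap \bigcup_n h^n\Theta$ established in the preceding discussion, but not in $\Delta_G = \PML(S) - Z\Lambda_G$.

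For $\Omega \subseteq \PML(S) - (\Lambda_G \cup \bigcup gZ\mu^-_h)$, by $G$-invariance of all three sets it suffices to show $\Theta$ is disjoint from each. Disjointness from $\Lambda_G$ follows from the proper discontinuity of the $G$-action on $\Omega$ established in Section 3: fixed points of infinite-order elements of $G$ cannot lie in $\Omega$, and taking closures extends this to $\Lambda_G$. For the intervals, I factor $g = g'h^k$ with $g'$ not ending in a power of $h$, so that $gZ\mu^-_h = g'Z\mu^-_h$ (since $Z\mu^-_h$ is $h$-invariant); a short ping-pong computation using the disjointness of the four balls places $g'Z\mu^-_h$ in the interior of one of them, disjoint from $\Theta$.

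For the reverse inclusion, fix $x \notin \Lambda_G \cup \bigcup gZ\mu^-_h$ and run the following ping-pong algorithm: set $y_0 = x$; if $y_n \in \Theta$ halt, giving $x \in (a_n\cdots a_1)^{-1}\Theta \subseteq \Omega$; otherwise $y_n$ lies in a unique open ball $\mathrm{int}\,B^{\pm}_h$ or $\mathrm{int}\,B^{\pm}_f$, and let $a_{n+1}$ be the generator whose attracting open ball is the \emph{opposite} one (so $a_{n+1} = h$ if $y_n \in \mathrm{int}\,B^-_h$, and so on), and set $y_{n+1} = a_{n+1}y_n$. Consecutive letters $a_k, a_{k+1}$ are never inverses---for instance, $a_k = h$ forces $y_k \in h(\mathrm{int}\,B^-_h) = \PML(S) - B^+_h$, so $y_k \notin \mathrm{int}\,B^+_h$ and $a_{k+1} \neq h^{-1}$---hence each $w_n := a_n\cdots a_1$ is reduced. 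A case check using inclusions like $h^{-1}(\mathrm{int}\,B^-_h) \subseteq \mathrm{int}\,B^-_h$ and $h^{-1}(\mathrm{int}\,B^{\pm}_f) \subseteq \mathrm{int}\,B^-_h$ shows that the closed sets $V_n := w_n^{-1}(\overline{B_{y_n}})$, with $\overline{B_{y_n}}$ the closed ball containing $y_n$, form a nested decreasing family each containing $x$.

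The main obstacle is showing the algorithm must halt. Suppose not. If the letters $a_n$ eventually stabilize to some $a$ for $n > N$, then $V_n = w_N^{-1} a^{-(n-N)}(\overline{B_{y_N}})$ for large $n$, and direct computation gives: for $a = h$, $\bigcap_k h^{-k}(B^-_h) = Z\mu^-_h$ (as any point of $B^-_h \setminus Z\mu^-_h$ has forward $h$-orbit escaping $B^-_h$ by \eqref{pong}), forcing $x \in w_N^{-1} Z\mu^-_h$ against the hypothesis; for $a \in \{h^{-1}, f^{\pm 1}\}$ the relevant ball dynamics has a single attracting fixed point and $\bigcap V_n \subseteq \Lambda_G$, again a contradiction. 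If the $a_n$ do not stabilize, the sequence $w_n^{-1}$ converges to a boundary point $\xi \in \partial G$ that is not a fixed point of any conjugate of $h$; by Property \ref{kluniform}, $\mathfrak{I}(\xi)$ is a uniquely ergodic filling lamination, and a standard diameter-shrinking argument using \eqref{ping}, \eqref{pong}, and the strict nestings $h(B^+_h) \subseteq \mathrm{int}\,B^+_h$ and $f(B^+_f) \subseteq \mathrm{int}\,B^+_f$ at the block transitions between $h$- and $f$-letters gives $\bigcap V_n = \{\mathfrak{I}(\xi)\}$, forcing $x = \mathfrak{I}(\xi) \in \Lambda_G$, the final contradiction.
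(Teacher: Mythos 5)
Your overall architecture is sound and is, in essence, the paper's own argument in different clothing: your ping--pong algorithm computes, for a point $[\eta]\notin\Omega$, the infinite reduced word that the paper encodes by its map $\mathfrak{K}\co\mathfrak{X}\to\partial G$, and both inclusions then reduce to identifying the nested intersections $\bigcap_n V_n$ along such words. The forward inclusion, the reducedness of the words $w_n$, the nesting of the $V_n$, and the eventually-constant cases (where the intersection is $w_N^{-1}Z\mu^-_h$, or a single attracting fixed point handled by \eqref{ping} and \eqref{pong}) are all correct and match the corresponding pieces of Lemma \ref{fiberszero}.

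The gap is your final sentence: ``a standard diameter-shrinking argument \dots gives $\bigcap V_n=\{\mathfrak{I}(\xi)\}$.'' There is no standard such argument here, and your own computation shows why: $\bigcap_k h^{-k}(B^-_h)=Z\mu^-_h$ is an entire interval, so nesting---even strict nesting into interiors---does not shrink diameters in $\PML(S)$. Equations \eqref{ping} and \eqref{pong} let you iterate a \emph{single} mapping class, which covers periodic words (fixed points of pseudo-Anosov elements of $G$), but for an aperiodic word the sets $V_n$ are images of a fixed ball under a sequence of \emph{distinct} group elements, and no single element's north--south dynamics applies. This is precisely where the paper must work: it uses (i) the additional arranged hypothesis that $B^\pm_f$ be disjoint from $\mathbb{J}_h$, which yields $\Sigma^+_f\cap Z\Lambda_G=\emptyset$ and hence a uniform lower bound $i(\nu,\mu^\pm)>r$ on a section $\mathfrak{S}$ of $\Sigma^+_f$; (ii) Lemma \ref{boundedangles}, where divergence of the expansion factors $\lambda(f_k)$ forces the projective scaling constants $t_k\to 0$; (iii) the McCarthy--Papadopoulos lemma producing $s_k\to 0$ with $s_kf_k(\mu)\to\mu^+$, so that $i(\eta,\mu^+)=\lim t_ks_k\,i(\nu_k,\mu)=0$; and (iv) unique ergodicity of $\mathfrak{I}(\xi)$ to upgrade $i(\eta,\mu^+)=0$ to $[\eta]=[\mu^+]$. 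None of this machinery appears in your proposal---indeed you never invoke the disjointness of $B^\pm_f$ from $\mathbb{J}_h$ at all, which is a telltale sign the essential step is missing---so the claim that the aperiodic intersections are singletons, which is the crux of the theorem, remains unproved.
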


First note that $\Delta_G \neq \Omega$ as 
 $\Sigma^-_h \subset \Theta \subset \Omega$ nontrivially intersects  $\mathbb{J}_h \subset Z \Lambda_G = \PML(S) - \Delta_G$.  

To prove the containment, we must gather some information about the complement of $\Omega$.  
Let $\mathfrak{X} = \PML(S) - \Omega$.

\begin{lemma}
There is a continuous $G$--equivariant map
\[
\mathfrak{K}\co\mathfrak{X} \to \partial G.
\]
\end{lemma}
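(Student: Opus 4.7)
The plan is to construct $\mathfrak{K}$ by recording, for each $x \in \mathfrak{X}$, its itinerary through the four balls under the action of $G$. For each generator $a \in \{h^{\pm 1}, f^{\pm 1}\}$, let $B_a$ denote the attracting ball of $a$, so $B_h = B_h^+$, $B_{h^{-1}} = B_h^-$, and similarly for $f$. The defining identity then reads $a^{-1}(\mathrm{int}(B_a)) = \PML(S) \setminus B_{a^{-1}}$.

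Every $x \in \mathfrak{X}$ lies in the interior of exactly one of these four balls: since $\Theta \subset \Omega$, the set $\mathfrak{X}$ is contained in $\PML(S) \setminus \Theta$, which is the disjoint union of the four open balls. Let $g_1 \in \{h^{\pm 1}, f^{\pm 1}\}$ be the unique generator with $x \in \mathrm{int}(B_{g_1})$. Since $\mathfrak{X}$ is $G$-invariant, $g_1^{-1}x \in \mathfrak{X}$, and the defining identity forces $g_1^{-1}x \notin B_{g_1^{-1}}$, so $g_1^{-1}x$ determines a next generator $g_2 \neq g_1^{-1}$. Iterating yields a reduced infinite word $(g_1, g_2, g_3, \ldots)$, and since $G$ is free on $\{h, f\}$, this is a geodesic ray in the Cayley graph and therefore names a unique point $\xi \in \partial G$. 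Define $\mathfrak{K}(x) = \xi$.

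Continuity follows directly from the openness of each $\mathrm{int}(B_a)$: if $x_n \to x$ with $\mathfrak{K}(x) = (g_1, g_2, \ldots)$, then $x_n \in \mathrm{int}(B_{g_1})$ for $n$ large and the first letters agree; iterating the argument on $g_1^{-1} x_n \to g_1^{-1} x \in \mathrm{int}(B_{g_2})$ yields eventual agreement of any fixed initial prefix, which is precisely convergence in $\partial G$.

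For equivariance it suffices, by induction on word length, to show $\mathfrak{K}(ax) = a \cdot \mathfrak{K}(x)$ for a single generator $a$. There are two cases. If $a \neq g_1^{-1}$, then since the four balls are pairwise disjoint, $\mathrm{int}(B_{g_1}) \subset \PML(S) \setminus B_{a^{-1}}$, and applying $a$ to the defining identity gives $ax \in \mathrm{int}(B_a)$; thus the address of $ax$ starts with $a$ and continues with the address of $a^{-1}(ax) = x$, yielding $(a, g_1, g_2, \ldots)$, which is the reduction of $a \cdot (g_1, g_2, \ldots)$. If $a = g_1^{-1}$, then $ax = g_1^{-1}x$ has address $(g_2, g_3, \ldots)$ directly by construction, and this also coincides with the reduction of $a \cdot (g_1, g_2, \ldots)$. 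The main subtlety lies in handling this case split carefully; the remainder is standard ping-pong bookkeeping.
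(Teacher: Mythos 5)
Your construction is exactly the paper's: the paper frames $\mathfrak{K}$ as the boundary extension of the projection to the tree dual to the translates of $\Sigma^-_h\cup\Sigma^-_f$, but its concrete description of the extension on $\mathfrak{X}$ is precisely your itinerary map through the nested balls $x_1^{\epsilon_1}\cdots x_i^{\epsilon_i}\bigl(B^{\epsilon_i}_{x_i}\bigr)$, with continuity checked the same way (openness of the interiors of the balls). Your argument is correct, and your explicit verification of equivariance and of the uniqueness of the itinerary fills in details the paper leaves implicit.
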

\begin{proof}
The spheres $\Sigma^\pm_h$ and $\Sigma^\pm_f$ are bicollared with collars $N(\Sigma^\pm_h)$ and $N(\Sigma^\pm_f)$.
We assume as we may that
\[ 
h(N(\Sigma^-_h)) = N(\Sigma^+_h) \mbox{ and } f(N(\Sigma^-_f)) = N(\Sigma^+_f) 
\]
and that all of the $G$--translates of these collars are pairwise disjoint.

Let $\mathcal G$ be the Cayley graph of $G$ and identify $\partial G = \partial \mathcal G$.  
We define a continuous $G$--equivariant map
\[ 
\mathfrak{K}_0\co\Omega \to \mathcal G 
\]
by identifying $\mathcal G$ with the tree dual to the hypersurface
\[
\bigcup_{g \in G} g\big(\Sigma^-_h\big) \cup \bigcup_{g \in G} g\big(\Sigma^-_f \big)
\]
in $\Omega$ and projecting in the usual manner, see \cite{Sh}.

The map $\mathfrak{K}_0$ extends continuously to a $G$--equivariant map
\[
 \mathfrak{K}\co \PML(S) \to \overline{\mathcal G} = \mathcal G \cup \partial G
\] 
whose restriction to $\mathfrak{X}$ is the map we desire.
The extension is described concretely as follows.

First note that given any point $[\eta] \in \mathfrak{X}$, there is a unique sequence of elements 
\[
x_1^{\epsilon_1},x_2^{\epsilon_2},x_3^{\epsilon_3},\ldots
\]
 where $x_i \in \{f,h\}$ and $\epsilon_i \in \{ \pm 1\}$ with the property that $[\eta]$ is contained in the nested intersection
\[ \bigcap_{i=1}^\infty x_1^{\epsilon_1} \cdots x_i^{\epsilon_i}(B^{\epsilon_i}_{x_i}). \]
where $B^{\pm1}_h = B^\pm_h$ and $B^{\pm1}_f =B^\pm_f$.  
Identifying $\partial G$ with the set of infinite reduced words, our map is given there by
\[ 
\mathfrak{K}([\eta]) = x_1^{\epsilon_1}x_2^{\epsilon_2}x_3^{\epsilon_3}\cdots. 
\]

To see that $\mathfrak{K}$ is continuous, let $\mathcal U_g \subset \overline{\mathcal G}$ be the open set consisting of all infinite reduced words in $\partial G$ with prefix $g$ together with the union of the open tails of the corresponding paths in $\mathcal G$.  Now if $g$ ends in $x_0^{\epsilon_0}$ with $x_0 \in \{h,f\}$ and $\epsilon_0 \in \{\pm 1\}$, then
\[ 
\mathfrak{K}^{-1}(\mathcal U_g) =  gx_0^{-\epsilon_0}(\mathrm{int}\, B^{\epsilon_0}_{x_0})
\]
which is open.
\end{proof}

\begin{lemma}
$\mathfrak{K}$ is a one-sided inverse to $\mathfrak{I}$.  That is, $\mathfrak{K} \circ \mathfrak{I} = \mathrm{id}_{\partial G}$.
\end{lemma}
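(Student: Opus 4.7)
Since $G$ acts cocompactly on $\Omega$, no $G$-orbit accumulates in $\Omega$, whence $\Lambda_G\subset\mathfrak{X}$ and the composition $\mathfrak{K}\circ\mathfrak{I}\colon\partial G\to\partial G$ is defined. The plan is to show that this continuous $G$-equivariant map equals the identity by pinning it down at a single convenient point, propagating the equality by equivariance, and concluding by continuity and density.

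The point I would use is the attracting fixed point $\xi:=fff\cdots\in\partial G$ of $f$. By $G$-equivariance, $\mathfrak{I}(\xi)$ is $f$-fixed in $\PML(S)$; since $f$ is pseudo-Anosov its only fixed points are $[\mu^\pm_f]$, and the dynamics-preserving nature of the homeomorphism $\mathfrak{I}$ forces $\mathfrak{I}(\xi)=[\mu^+_f]$. To compute $\mathfrak{K}([\mu^+_f])$, I would observe that the defining relation $f(B_f^-)=\overline{\PML(S)-B_f^+}$ together with the pairwise disjointness of the four balls yields $f(B_f^+)\subset\mathrm{int}(B_f^+)$, and that $[\mu^+_f]=[\mu^+_\psi]$ lies in $B_f^+=f(\mathcal{B}_\psi)$ since $f=\psi^m$ fixes $[\mu^+_\psi]\in\mathcal{B}_\psi$. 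Iterating, $[\mu^+_f]\in f^n(B_f^+)$ for every $n\geq 1$, so the unique nested word describing $\mathfrak{K}([\mu^+_f])$ is $fff\cdots=\xi$.

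By the $G$-equivariance of both $\mathfrak{K}$ and $\mathfrak{I}$, the map $\mathfrak{K}\circ\mathfrak{I}$ then agrees with the identity on the entire orbit $G\cdot\xi$. This orbit is dense in $\partial G$: being free of rank two, $G$ is non-elementary Gromov hyperbolic and hence acts minimally on its boundary. Continuity of both $\mathfrak{K}\circ\mathfrak{I}$ and $\mathrm{id}_{\partial G}$ then upgrades agreement on this dense subset to agreement everywhere.

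The only non-formal step is the ping--pong computation $\mathfrak{K}([\mu^+_f])=\xi$, which amounts to unwinding the construction of $B_f^+$ and the defining ping--pong dynamics of $f$; everything else is soft.
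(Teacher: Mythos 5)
Your proof is correct and follows essentially the same route as the paper's: both pin down $\mathfrak{K}\circ\mathfrak{I}$ at the attracting fixed point of $f$, propagate the equality by $G$--equivariance to the dense set of attracting fixed points of conjugates of $f$ (your orbit $G\cdot\xi$), and finish by continuity. The only difference is that you spell out the ping--pong computation $\mathfrak{K}([\mu^+_f])=x^+_f$, which the paper leaves implicit.
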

\begin{proof}
Since $\mathfrak{X}$ is a $G$--invariant closed set, it contains $\Lambda_G$, and so $\mathfrak{K} \circ \mathfrak{I}$ is well-defined.
Next, suppose that $x^+_f$ is the attracting fixed point of $f$.  Then $\mathfrak{I}(x^+_f) = [\mu^+_f]$ is the attracting fixed point in $\PML(S)$ of $f$, and hence $\mathfrak{K}(\mathfrak{I}(x^+_f)) = x^+_f$.  The same is true for any conjugate of $f$, and hence $\mathfrak{K} \circ \mathfrak{I}$ is the identity on the set of attracting fixed points of conjugates of $f$.  Being $G$--invariant, this set is dense in $\partial G$, and so, by continuity, $\mathfrak{K} \circ \mathfrak{I}$ is the identity on all of $\partial G$.
\end{proof}

Theorem \ref{pdexample2} follows easily from the following lemma.

\begin{lemma} \label{fiberszero}
For all $x \in \partial G$, we have $\mathfrak{K}^{-1}(x) \subset Z \mathfrak{I}(x)$.
In fact, if $x$ is the repelling fixed point $x^-_{^g\!h}$ of a conjugate $^gh$ of $h$, then $\mathfrak{K}^{-1}(x) = g Z \mu^-_h$.
Otherwise, the set $\mathfrak{K}^{-1}(x)$ is a singleton contained in $\Lambda_G$.
\end{lemma}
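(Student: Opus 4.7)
The plan is to split the statement into three cases based on $x$: either $x=x^-_{{}^{g}h}$ is a repelling fixed point of a conjugate of $h$, or $x=x^+_{{}^{g}h}$ is an attracting fixed point of such a conjugate, or $x$ is not fixed by any conjugate of $h$. By the $G$--equivariance of $\mathfrak{K}$ and $\mathfrak{I}$ it suffices to treat $g=e$ in the first two cases. Since the previous lemma gives $\mathfrak{K}\circ\mathfrak{I}=\mathrm{id}_{\partial G}$, $\mathfrak{I}(x)$ automatically lies in $\mathfrak{K}^{-1}(x)$, and the containment $\mathfrak{K}^{-1}(x)\subset Z\mathfrak{I}(x)$ of the first assertion will fall out of the explicit descriptions below.

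For $x=x^-_h$ the reduced word is $h^{-1}h^{-1}\cdots$, so the nested intersection defining $\mathfrak{K}^{-1}(x^-_h)$ collapses to $\bigcap_{n\geq1}h^{-n}(B^-_h)$, a decreasing family because $h^{-1}(B^-_h)\subset B^-_h$. On the one hand, the interval $Z\mu^-_h\subset B^-_h$ is $h$--invariant and so lies in every term. On the other hand, for $[\nu]\in\PML(S)-Z\mu^-_h$, equation~\eqref{pong} applied to $h$ gives $h^n[\nu]\to[\mu^+_h]\in B^+_h$, so $[\nu]$ exits $B^-_h$ for some $n$. Thus $\mathfrak{K}^{-1}(x^-_h)=Z\mu^-_h$, and equivariance gives $\mathfrak{K}^{-1}(x^-_{{}^{g}h})=g\,Z\mu^-_h$. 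For $x=x^+_h$ the word $hh\cdots$ yields $\bigcap_{n\geq1}h^n(B^+_h)$; by~\eqref{pong} applied to $h^{-1}$, any $[\nu]\in\PML(S)-Z\mu^+_h$ satisfies $h^{-n}[\nu]\to[\mu^-_h]\in B^-_h$, while the interval dynamics of $h^{-1}$ push any $[\nu]\in Z\mu^+_h\setminus\{[\mu^+_h]\}$ toward $[\alpha]\in Z\mu^-_h\subset B^-_h$. Hence $\mathfrak{K}^{-1}(x^+_h)=\{[\mu^+_h]\}$, a singleton in $\Lambda_G$ contained in $Z\mu^+_h=Z\mathfrak{I}(x^+_h)$.

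In the remaining case $\mathfrak{I}(x)$ is uniquely ergodic and filling by Property~\ref{kluniform}, hence $Z\mathfrak{I}(x)=\{\mathfrak{I}(x)\}$; since $\mathfrak{I}(x)\in\mathfrak{K}^{-1}(x)$ is automatic, it remains to prove $\mathfrak{K}^{-1}(x)\subset\{\mathfrak{I}(x)\}$. Given $[\eta]\in\mathfrak{K}^{-1}(x)$, let $g_n=x_1^{\epsilon_1}\cdots x_n^{\epsilon_n}$ be the length-$n$ prefixes of the word for $x$, so that $p_n=g_n^{-1}[\eta]$ lies in the ball $B^{\epsilon_n}_{x_n}$. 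Because $x$ is not fixed by any conjugate of $h$, its infinite word contains infinitely many $f^{\pm1}$--letters, and along such indices $p_n$ is trapped in one of $B^\pm_f$, staying bounded away from $[\mu^\mp_f]$ by the unique ergodicity of the $f$--fixed points. Pairing $g_n\to x$ in $\partial G$ with the convergence--group behaviour of $G$ on $\Lambda_G$ (transported through $\mathfrak{I}$) gives $g_n[\nu]\to\mathfrak{I}(x)$ for $[\nu]$ outside an exceptional zero locus; combined with the trapping of $p_n$, this forces $g_n p_n\to\mathfrak{I}(x)$ along the $f$--subsequence, and since $g_n p_n=[\eta]$ we conclude $[\eta]=\mathfrak{I}(x)$. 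The main obstacle is precisely this last convergence step: $G$ is not convex cocompact, so the required north--south--like dynamics on $\PML(S)$ must be extracted carefully, with the pseudo--Anosov $f$--letters (rather than the reducible $h$--letters) supplying the contraction that drives the fiber down to a single point.
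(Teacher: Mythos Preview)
Your treatment of the fixed points of conjugates of $h$ is correct and essentially matches the paper. The gap is in the third case, and you name it yourself: you invoke ``convergence--group behaviour of $G$ on $\Lambda_G$ transported through $\mathfrak{I}$'' to conclude $g_n[\nu]\to\mathfrak{I}(x)$ for generic $[\nu]\in\PML(S)$, but the homeomorphism $\mathfrak{I}$ only controls the $G$--action on $\Lambda_G$ itself. Your moving points $p_n=g_n^{-1}[\eta]$ lie in $B^\pm_f$, not in $\Lambda_G$, and there is no soft way to promote convergence dynamics on the Cantor set $\Lambda_G$ to the ambient sphere $\PML(S)$; the failure of north--south dynamics for the reducible element $h$ is precisely the obstruction. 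Saying that the $f$--letters ``supply the contraction'' is the right intuition, but nothing in your argument actually extracts that contraction.

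The paper closes this gap with a quantitative intersection-number argument rather than a dynamical one. The key preparatory fact, established just before the lemma and depending on the extra hypothesis that $B^\pm_f$ miss $\mathbb{J}_h$, is that $\Sigma^+_f\cap Z\Lambda_G=\emptyset$. One writes $\mathfrak{K}^{-1}(x)=\bigcap_k f_k(B^+_f)$ along the subsequence of prefixes $f_k$ ending in $f$, so every frontier point $[\eta]$ is a limit $\lim_k f_k([\nu_k])$ with $[\nu_k]\in\Sigma^+_f$. Since the $f_k$ are distinct pseudo-Anosovs with $[\mu^\pm_{f_k}]\to[\mu^\pm]\in\Lambda_G$ and the $\nu_k$ stay uniformly transverse to $\mu^\pm$ (by the disjointness of $\Sigma^+_f$ from $Z\Lambda_G$), Lemma~\ref{boundedangles} forces the scaling factors $t_k$ in $t_kf_k(\nu_k)\to\eta$ to tend to zero. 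Pairing this against a second sequence $s_kf_k(\mu)\to\mu^+$ with $s_k\to 0$ yields $i(\eta,\mu^+)=\lim t_ks_k\,i(\nu_k,\mu)=0$, and unique ergodicity of $\mu^+=\mathfrak{I}(x)$ finishes. The missing ingredients in your sketch are therefore: work on the boundary sphere $\Sigma^+_f$ rather than the interior of the balls, use the carefully arranged disjointness $\Sigma^+_f\cap Z\Lambda_G=\emptyset$, and replace the hoped-for dynamical convergence by the measured-lamination estimate of Lemma~\ref{boundedangles}.
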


\begin{proof}[Proof of Theorem \ref{pdexample2} assuming Lemma \ref{fiberszero}]
By the first statement, $\mathfrak{X} \subset Z \Lambda_G$ since
\[ 
Z \Lambda_G = \bigcup_{x \in \partial G} Z \mathfrak{I}(x).
\]
So $\Omega \supset \Delta_G$ as required.
Again, the containment is proper as $Z\mu^+_h$ nontrivially intersects $\Omega$.

The description of $\Omega$ follows from the second and third statements.
\end{proof}

We need the following general fact about sequences of laminations.

\begin{lemma} \label{boundedangles}
Suppose $\mathfrak{S} \subset \ML(S)$ is a compact set, $\{f_k\} \subset \Mod(S)$ is an infinite sequence of distinct pseudo-Anosov mapping classes with 
\[
\mu^\pm_{f_k} \to \mu^\pm
\]
in $\ML(S)$, and that $\{\nu_k \}_{k=1}^\infty \subset \mathfrak{S}$ and $\{t_k \}_{k=1}^\infty \subset \mathbb R_+$ are sequences with 
\[
t_k f_k(\nu_k) \to \eta
\]
in $\ML(S)$.

If there is an $r > 0$ such that
\[ i(\nu,\mu^\pm) > r\]
for all $\nu \in \mathfrak{S}$, then $t_k \to 0$.
\end{lemma}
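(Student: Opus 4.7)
The plan is to extract a bound on $t_k \lambda(f_k)$ from intersection with $\mu^-_{f_k}$, and then independently show $\lambda(f_k)\to\infty$ via proper discontinuity on Teichm\"uller space; together these give $t_k\to 0$.

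For the first bound, I would use $\Mod(S)$-equivariance of $i$ and the spectral identity $f_k^{-1}(\mu^-_{f_k}) = \lambda(f_k)\,\mu^-_{f_k}$ to rewrite
$$i\bigl(t_k f_k(\nu_k),\, \mu^-_{f_k}\bigr) \;=\; t_k\, i\bigl(\nu_k,\, f_k^{-1}(\mu^-_{f_k})\bigr) \;=\; t_k\, \lambda(f_k)\, i\bigl(\nu_k,\,\mu^-_{f_k}\bigr).$$
The hypotheses $t_k f_k(\nu_k)\to\eta$ and $\mu^-_{f_k}\to\mu^-$ together with joint continuity of $i$ on $\ML(S)\times\ML(S)$ send the left-hand side to the finite number $i(\eta,\mu^-)$. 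At the same time, compactness of $\mathfrak{S}$, continuity of $i$, and the hypothesis $i(\nu,\mu^-)>r$ for $\nu\in\mathfrak{S}$ give $i(\nu_k,\mu^-_{f_k})>r/2$ for all large $k$. Hence $t_k\lambda(f_k)$ is eventually bounded above.

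The hard part will be showing $\lambda(f_k)\to\infty$. Supposing otherwise, pass to a subsequence with $\lambda(f_k)\le C$. By Ivanov's finiteness result---there are only finitely many $\Mod(S)$-conjugacy classes of pseudo-Anosovs with dilatation $\le C$---extract a further subsequence with $f_k=g_k\phi_0 g_k^{-1}$ for a single pseudo-Anosov $\phi_0$; distinctness of the $f_k$ then forces the $g_k$ to lie in distinct cosets of the (virtually cyclic) centralizer of $\phi_0$. Invariance of intersection yields $i(\mu^+_{f_k},\mu^-_{f_k})=i(\mu^+_{\phi_0},\mu^-_{\phi_0})>0$, a positive constant that persists in the limit, so $(\mu^+,\mu^-)$ is a transverse pair. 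The Hubbard--Masur correspondence then assigns to each $(g_k\mu^+_{\phi_0}, g_k\mu^-_{\phi_0})$ a canonical point $X_k=g_k X_{\phi_0}\in\mathrm{Teich}(S)$ on the $f_k$-axis, with $X_k\to X_\infty$ by continuity. Since $f_k$ conjugates $\phi_0$ and translates $X_k$ by the fixed amount $\log\lambda(\phi_0)$, the triangle inequality bounds $d_{\mathrm{Teich}}(X_\infty, f_k X_\infty)$ uniformly in $k$, contradicting proper discontinuity of $\Mod(S)$ on $\mathrm{Teich}(S)$.

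With $\lambda(f_k)\to\infty$ and $t_k\lambda(f_k)$ bounded, we conclude $t_k\to 0$. The intersection-pairing rewriting and the uniform-continuity estimate are routine; the delicate step, and the one where the distinctness of the $f_k$ is essential, is the reduction to a single conjugacy class followed by the Hubbard--Masur/Teichm\"uller proper-discontinuity argument.
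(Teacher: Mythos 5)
Your first half is exactly the paper's argument: the same rewriting $i\bigl(t_kf_k(\nu_k),\mu^-_{f_k}\bigr)=t_k\lambda(f_k)\,i\bigl(\nu_k,\mu^-_{f_k}\bigr)$, the same use of continuity of $i$ to make the left side converge to $i(\eta,\mu^-)$, and the same compactness argument giving a lower bound on $i\bigl(\nu_k,\mu^-_{f_k}\bigr)$, yielding that $t_k\lambda(f_k)$ is bounded. Where you diverge is the last step: the paper simply asserts that distinct $f_k$ with converging fixed points must have $\lambda(f_k)\to\infty$, whereas you try to prove it via finiteness of conjugacy classes of bounded dilatation (a correct result) plus a Hubbard--Masur/proper-discontinuity argument. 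That attempted proof has a genuine gap.

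The gap is the claim that $i\bigl(\mu^+_{f_k},\mu^-_{f_k}\bigr)=i\bigl(\mu^+_{\phi_0},\mu^-_{\phi_0}\bigr)$. The laminations $\mu^\pm_{f_k}$ in the hypotheses are particular representatives, chosen so that they converge in $\ML(S)$, and they agree with $g_k(\mu^\pm_{\phi_0})$ only up to positive scalars $a_k^\pm$; hence $i\bigl(\mu^+_{f_k},\mu^-_{f_k}\bigr)=a_k^+a_k^-\,i\bigl(\mu^+_{\phi_0},\mu^-_{\phi_0}\bigr)$, and the scalars can tend to $0$. Concretely, take $f_k=\psi^k\phi_0\psi^{-k}$ for independent pseudo-Anosovs $\phi_0,\psi$: these are distinct, $\lambda(f_k)=\lambda(\phi_0)$ for every $k$, and with the scalings $\lambda(\psi)^{-k}\psi^k(\mu^{\pm}_{\phi_0})$ both the stable and unstable representatives converge in $\ML(S)$ to positive multiples of $\mu^+_\psi$ (assuming $i(\mu^{\pm}_{\phi_0},\mu^-_\psi)>0$, which holds generically); a compact $\mathfrak{S}$ with $i(\nu,\mu^\pm)>r$ then certainly exists. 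So the limit pair can satisfy $[\mu^+]=[\mu^-]$ and $i(\mu^+,\mu^-)=0$, in which case there is no limiting quadratic differential, the points $X_k=g_kX_{\phi_0}$ (equivalently the axes $g_k\cdot\mathrm{axis}(\phi_0)$) leave every compact set of Teichm\"uller space, and the proper-discontinuity contradiction evaporates. Worse, this example shows that ``$\lambda(f_k)\to\infty$'' is simply not a consequence of the lemma's hypotheses, so no repair of your step can succeed without extra input. In the paper's application one knows in addition that $[\mu^+]\neq[\mu^-]$ with $\mu^+$ uniquely ergodic and filling, which forces $i(\mu^+,\mu^-)>0$; with that supplement your conjugacy-class/axis/proper-discontinuity argument does go through (and is a reasonable way to justify the assertion the paper leaves unproved). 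As written, however, the key step is not established.
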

\begin{proof}
Note that continuity of $i$ and compactness of $\mathfrak{S}$ imply that there exist $K > 0$ and $R > 1$ such that for all $k \geq K$ and all $\nu \in \mathfrak{S}$
\[ 
\frac{1}{R} < i\big(\nu,\mu^\pm_{f_k}\big) < R .
\]

By the continuity of $i$ we have
\[ 
\lim_{k\to\infty} i\big(t_kf_k(\nu_k),\mu^-_{f_k}\big) = i(\eta,\mu^-),
\]
and so, for sufficiently large $k$, we have
\[ 
i(\eta,\mu^-) - 1 <  i\big(t_kf_k(\nu_k),\mu^-_{f_k}\big) < i(\eta,\mu^-) + 1. 
\]
The central term of this inequality is also given by
\begin{align*} i\big(t_kf_k(\nu_k),\mu^-_{f_k}\big) & =  i\big(t_k \nu_k,f_k^{-1}(\mu^-_{f_k})\big) \\
 & =  t_k i\big(\nu_k,\lambda(f_k^{-1})\mu^-_{f_k}\big) \\
 & =  t_k \lambda(f_k)i\big(\nu_k,\mu^-_{f_k}\big)
 \end{align*}
 where $\lambda(f_k)$ is the expansion factor of $f_k$, and so, for all sufficiently large $k$, we have
\[ 
\frac{i(\eta,\mu^-) - 1}{R} < t_k \lambda(f_k) < R(i(\eta,\mu^-) + 1).
\]
Since the $f_k$ are all distinct, and their fixed points converge in $\PML(S)$, it follows that $\lambda(f_k) \to \infty$.  So $t_k \to 0$ as required.
\end{proof}

\begin{proof}[Proof of Lemma \ref{fiberszero}]
First assume that $x \in \partial G$ is the fixed point of a conjugate of $h$.  
By the $G$--equivariance of $\mathfrak{K}$, it suffices to consider the case of $h$ itself.  
Then, we have $x = x^+_h$ or $x = x^-_h$.  In this case, the sequences of balls nesting to $\mathfrak{K}^{-1}(x^+_h)$ and $\mathfrak{K}^{-1}(x^-_h)$ are given by
\[ 
\{h^k(B^+_h)\}_{k=1}^\infty \mbox{ and } \{h^{-k}(B^-_h)\}_{k=1}^\infty, 
\]
respectively.

From the discussion in Section \ref{prelimsection}, we already know that
\[ 
\mathfrak{K}^{-1}(x^+_h) = \bigcap_{k=1}^\infty h^k(B^+_h) = \{ [\mu^+_h]\} \subset Z \mathfrak{I}(x^+_h)
\]
and
\[ 
\mathfrak{K}^{-1}(x^-_h) = \bigcap_{k=1}^\infty h^{-k}(B^-_h) = Z\mu^-_h = Z \mathfrak{I}(x^-_h).
\]

If $g \in G$ is any other element not conjugate to a power of $h$, then, by Property \ref{kluniform}, $g$ is pseudo-Anosov, and the dynamical properties of pseudo-Anosov mapping classes discussed in Section \ref{prelimsection} implies
\[ 
\mathfrak{K}^{-1}(x^\pm(g)) = \{ [\mu^\pm(g)] \} = Z \mathfrak{I}(x^\pm(g)).
\]

Therefore, to complete the proof of the lemma, we assume that $x \in \partial G$ is not a fixed point of any element of $G$.

We write $x$ as an infinite reduced word
\[ 
x = x_1^{\epsilon_1}x_2^{\epsilon_2}x_3^{\epsilon_3}\cdots. 
\]
Since $x$ is not the fixed point of any element of $G$, we can assume that $x_n = f$ and, say, $\epsilon_n = +1$ for infinitely many $n$ (the case that $x_n = f$ and $\epsilon_n = -1$ for infinitely many $n$ is similar).  
The $G$--equivariance of $\mathfrak{K}$ implies that we may also assume that $x_1 = f$ and $\epsilon_1 = 1$.  
Let $\{n_k\}_{k=1}^\infty$ be the increasing sequence of natural numbers for which $x_{n_k} = f$ and $\epsilon_{n_k} = +1$.  
Finally, set
\[ 
f_k = x_1^{\epsilon_1} x_2^{\epsilon_2}x_3^{\epsilon_3} \cdots x_{n_k}^{\epsilon_{n_k}} \in G. 
\]

Then, we have $\mathfrak{K}^{-1}(x)$ expressed as the nested intersection
\[ 
\mathfrak{K}^{-1}(x) = \bigcap_{k=1}^\infty f_k(B^+_f).
\]
Any point $[\eta]$ in the frontier of $\mathfrak{K}^{-1}(x)$ is a limit of a sequence in the frontiers
\[ 
[\eta] = \lim_{k \to \infty} f_k([\nu_k])
\]
where
\[ [\nu_k] \in \mathrm{Fr}(B^+_f) = \Sigma^+_f.\]
We fix any such $[\eta] \in \mathrm{Fr}(\mathfrak{K}^{-1}(x))$ and such a sequence $\{[\nu_k]\}$.

We pass to a further subsequence so that $\mu^\pm_{f_k} \to \mu^\pm \in \ML(S)$.  Since $[\mu^\pm_{f_k}] \in \Lambda_G$ for all $k$, we also have $[\mu^\pm] \in \Lambda_G$.  In fact, since $f_k= x_1^{\epsilon_1}\cdots x_{n_k}^{\epsilon_{n_k}}$ is cyclically reduced, the axes for $f_k$ in $\mathcal G$ all go through the origin and limit to a geodesic $\gamma \subset \mathcal G$ through ${\bf 1}$ with positive ray ending at $x$.  Therefore, $x^+_{f_k} \to x$ as $k \to \infty$, and by continuity of $\mathfrak{I}$, it follows that
\[ 
\mathfrak{I}(x) = [\mu^+] \in \Lambda_G.
\]
Moreover, the negative ray of $\gamma$ ends at some point $y \in \partial G$ and is described by an infinite word
\[ 
y = y_1^{\delta_1} y_2^{\delta_2} y_3^{\delta_3} \cdots 
\]
where $y_1^{\delta_1} \neq f$ since $x_1^{\epsilon_1} =f$ and $\gamma$ is a geodesic.  Therefore, again appealing to the continuity of $\mathfrak{I}$ we see that
\[ 
\mathfrak{I}(y) = [\mu^-] \in \Lambda_G \cap \PML(S) - B^+_f.
\]

By similar reasoning, for any $[\mu] \in \Lambda_G \cap B^+_f$, we have
\[ 
f_k([\mu]) \to [\mu^+] = \mathfrak{I}(x).
\]
In fact, it follows from \cite[Lemma 2.7]{MP} that there is a $\mu$ (a fixed point of a pseudo-Anosov in $G$)
and a sequence $s_k$ tending to zero such that
\[ 
\lim_{k \to \infty} s_k f_k(\mu) = \mu^+ \in \ML(S).
\]

We now let $\mathfrak{S} \subset \ML(S)$ be the image of $\Sigma^+_f$ under some continuous section of $\ML(S) \to \PML(S)$.  
Since $\Sigma^+_f \cap Z \Lambda_G = \emptyset$, there is an $r > 0$ such that
\[
i(\nu,\mu^\pm) > r 
\]
for every $\nu \in \mathfrak{S}$.

We take the representatives $\nu_k$ of $[\nu_k]$ to lie in $\mathfrak{S}$.  
Then, according to Lemma \ref{boundedangles}, the sequence $t_k$ for which
\[
\lim_{k\to\infty} t_k f_k(\nu_k) = \eta
\]
must tend to zero.  
So
\[
i(\eta,\mu^+) = \lim_{k \to \infty} i\big(t_k f_k(\nu_k),s_k f_k(\mu)\big) = \lim_{k \to \infty} t_k s_k i(\nu_k,\mu) = 0
\]
since $s_k$ and $t_k$ tend to zero and $i(\nu_k,\mu)$ is uniformly bounded by compactness of $\mathfrak{S}$.
Since $\mu^+$ is uniquely ergodic, we conclude that $[\eta] = [\mu^+] = \mathfrak{I}(x)$.

This means that the frontier of $\mathfrak{K}^{-1}(x)$ is precisely $\{\mathfrak{I}(x)\}$, and hence
\[ \mathfrak{K}^{-1}(x) = \{\mathfrak{I}(x)\} = Z \mathfrak{I}(x)\]
as required.
\end{proof}

%%%%%%%%%%%%%%%%%%%%%%%%%%%%%%%%%%%%%%%%%%%
\section{Final comments}\label{finalcomments}
%%%%%%%%%%%%%%%%%%%%%%%%%%%%%%%%%%%%%%%%%%%

If we replace $h$ with $h^{-1}$ in our construction we obtain another $G$--invariant open set $\Omega'$ on which $G$ acts properly discontinuously and cocompactly.  
By Lemma \ref{fiberszero}, we have descriptions
\[
\Omega = \PML(S) - \left( \Lambda_G \cup \bigcup_{g\in G} g Z\mu^-_h \right)
\]
and
\[
\Omega' = \PML(S) - \left( \Lambda_G \cup \bigcup_{g\in G} g Z\mu^+_h \right),
\]
and it follows that
\[
\Omega \cup \Omega' = \PML(S)- \left(\Lambda_G \cup \bigcup_{g\in G} G \cdot \alpha\right).
\]  
The group $G$ does not act properly discontinuously on $\Omega \cup \Omega'$, and in fact, we have the following.

\begin{proposition} \label{maximal domains}
If $\mathcal{U} \subset \PML(S)$ is any open set on which $G$ acts properly discontinuously, then $\mathcal{U} \subset \Omega$ or $\mathcal{U} \subset \Omega'$.
\end{proposition}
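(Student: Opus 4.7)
The plan is to first constrain $\mathcal{U}$ to lie inside $\Omega \cup \Omega'$ by ruling out points with infinite stabilizer, and then to derive a contradiction from the hypothetical simultaneous presence of points of $\mathcal{U}$ on both sides of $\alpha$ in the interval $\mathbb{J}_h$, via the dynamics of $h$ at $\alpha$.

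First, the attracting fixed point $[\mu^+_g]$ of any pseudo-Anosov $g \in G$ is stabilized by the infinite cyclic group $\langle g \rangle$ and so cannot lie in the proper discontinuity domain $\mathcal{U}$; by Property~\ref{kluniform} these fixed points are dense in $\Lambda_G$, so the openness of $\mathcal{U}$ forces $\mathcal{U} \cap \Lambda_G = \emptyset$. Similarly, every $g\alpha$ is fixed by the infinite subgroup $g \langle h \rangle g^{-1}$, so $\mathcal{U} \cap G \cdot \alpha = \emptyset$. Since $\PML(S) - (\Lambda_G \cup G \cdot \alpha) = \Omega \cup \Omega'$, we conclude $\mathcal{U} \subset \Omega \cup \Omega'$.

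Now suppose for contradiction that $\mathcal{U}$ is contained in neither $\Omega$ nor $\Omega'$. The descriptions of $\Omega$ and $\Omega'$ together with the $G$-invariance of $\mathcal{U}$ allow us, by two independent $G$-translations, to arrange $p \in \mathcal{U} \cap \mathrm{int}(Z\mu^-_h)$ and $q \in \mathcal{U} \cap \mathrm{int}(Z\mu^+_h)$, so that $p$ and $q$ sit on opposite sides of $\alpha$ in $\mathbb{J}_h$. Choose open neighborhoods $V$ of $p$ and $W$ of $q$ with $\overline{V}, \overline{W}$ compact in $\mathcal{U}$.

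The hard part is producing infinitely many distinct $g \in G$ satisfying $g(\overline{V} \cup \overline{W}) \cap (\overline{V} \cup \overline{W}) \neq \emptyset$. The approach is to exploit the local structure of $h$ near $\alpha$: expressing points of $\PML(S)$ close to $\alpha$ as projective classes of the form $[a\alpha + (1-a)\nu]$ with $\nu$ a class on $S - \alpha$, the element $h$ fixes $\alpha$ and acts on the transverse $\nu$-coordinate as its pseudo-Anosov action on $\PML(S - \alpha)$, with repelling fixed point $\mu^-_h$ and attracting fixed point $\mu^+_h$. Since $V$ is an open neighborhood of $p$ whose $\nu$-coordinate at $\alpha$ is the repelling point $\mu^-_h$, the iterates $h^n V$ stretch in the $\nu$-direction across $\PML(S - \alpha)$, eventually containing any prescribed neighborhood of the class $[\mu^+_h]$, while the $a$-coordinate concentrates near $1$. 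On the other hand, $h^{-m}q$ lies on $Z\mu^+_h$ with $a$-coordinate tending to $1$ and $\nu$-coordinate $\mu^+_h$ as $m \to \infty$, so $h^{-m}W$ is a neighborhood of such a point. For all sufficiently large $n$ and $m$ we therefore have $h^n V \cap h^{-m} W \neq \emptyset$, which is equivalent to $h^{n+m} V \cap W \neq \emptyset$. Letting $n+m$ range over arbitrarily large positive integers produces infinitely many distinct $h^k$ with $h^k(\overline{V} \cup \overline{W}) \cap (\overline{V} \cup \overline{W}) \neq \emptyset$, contradicting proper discontinuity of $G$ on $\mathcal{U}$.
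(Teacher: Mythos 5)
Your first step is fine: excluding $\Lambda_G$ and $G\cdot[\alpha]$ from $\mathcal U$ via infinite stabilizers (together with the density of pseudo-Anosov fixed points in $\Lambda_G$) is correct, has the same effect as the paper's minimality argument, and legitimately produces interior points $p\in\mathcal U\cap\mathrm{int}\,Z\mu^-_h$ and $q\in\mathcal U\cap\mathrm{int}\,Z\mu^+_h$ after translating by $G$.

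The gap is in the step you yourself flag as the hard part. The claimed local model near $[\alpha]$ is false: the classes $[a\alpha+(1-a)\nu]$ with $\nu$ supported in $S-\alpha$ form only a codimension-one subset of a neighborhood of $[\alpha]$ in $\PML(S)$ (laminations crossing $\alpha$ accumulate on $[\alpha]$ and are missed entirely), and on laminations crossing $\alpha$ the action of $h$ is not given by the pseudo-Anosov action of $h|_{S-\alpha}$ on $\PML(S-\alpha)$ (for instance, $h$ may twist about $\alpha$, and the transverse directions are exactly where the interesting stretching happens). Consequently the pivotal claim --- that $h^nV$ sweeps across the interior of $Z\mu^+_h$ near $\alpha$ and therefore meets $h^{-m}W$ for all large $n,m$ --- is asserted, not proved; it is precisely the point where one must control the image of an open set rather than of individual points. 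Note also that the obvious confinement is unavailable in the direction you iterate: $\bigcap_k h^k(B^+_h)=\{[\mu^+_h]\}$ is a single point, so the ping-pong balls do not trap forward images of $V$ near $Z\mu^+_h$. The paper gets around this by iterating the other way: using $G$--invariance it moves a small compact ball about the chosen interior point of $Z\mu^+_h$ into $B^-_h$, so its $h^{-k}$--images lie in $h^{-k}(B^-_h)$ with $\bigcap_k h^{-k}(B^-_h)=Z\mu^-_h$; it then takes a Hausdorff limit of these images, which is connected, contains both $[\alpha]$ and $[\mu^-_h]$, and lies in the arc $Z\mu^-_h$, hence equals that arc and so meets a fixed neighborhood of the interior point of $Z\mu^-_h$. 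Some argument of this kind (or an honest proof of your intersection claim) is required; as written, the proposal assumes the essential dynamical statement rather than establishing it.
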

\begin{proof}
Let $\mathcal{U} \subset \PML(S)$ be a $G$--invariant open set.  We will show that if $\mathcal{U}$ is not contained in either $\Omega$ or $\Omega'$, then $G$ does not act properly discontinuously on $\mathcal{U}$.

If $\mathcal{U} \cap \Lambda_G \neq \emptyset$, then since $G$ acts minimally on $\Lambda_G$ and $\mathcal{U}$ is $G$--invariant, we must have $\Lambda_G \subset \mathcal{U}$.  As $G$ clearly fails to act properly discontinuously on $\mathcal{U}$ in this case, we assume that $\mathcal{U} \cap \Lambda_G = \emptyset$.

So if $\mathcal{U}$ fails to be contained in either $\Omega$ or $\Omega'$, there are points $[\eta^+] \in \mathcal{U} \cap Z \mu^+_h$ and $[\eta^-] \in \mathcal{U} \cap Z \mu^-_h$.   Moreover, $[\eta^\pm]$ is in the interior of $Z \mu^\pm_h$.  Let $\Upsilon^\pm$ be small compact balls contained in $\mathcal{U}$ containing $[\eta^\pm]$.  Since $[\eta^+] \in \Omega$, we may assume that $\Upsilon^+ \subset \Omega$.  Moreover, $G$--invariance of $\mathcal{U}$ allows us to pick $[\eta^+]$ and $\Upsilon^+$ to lie in $B^-_h$.

After passing to a subsequence, we can assume that the sequence of sets \linebreak $\{h^{-{k_j}}(\Upsilon^+)\}_{j=1}^\infty$ converges in the Hausdorff topology.
Moreover, we have
\[ \lim_{j \to \infty}h^{-k_j}(\Upsilon^+) \subset \bigcap_{k=1}^\infty h^{-k}(B^-_k) = Z \mu^-_h.\]
Note that the Hausdorff limit must be connected since $\Upsilon^+$ is.  
This limit contains $\alpha$ as the pointwise limit of $h^{-k}[\eta^+]$, and $[\mu^-_h]$ as the pointwise limit of any other point of $\Upsilon^+$ under $h^{-k}$.  
Therefore,
\[ \lim_{j \to \infty}h^{-k_j}(\Upsilon^+) = Z \mu^-_h.\]

Now, consider the compact set $\Upsilon = \Upsilon^+ \cup \Upsilon^-$.  Since $\mathrm{int}(\Upsilon^-)$ is a neighborhood of $[\eta^-]$, we have
\[
h^{-k_j}(\Upsilon) \cap \Upsilon \supset h^{-k_j}(\Upsilon^+) \cap \mathrm{int}(\Upsilon^-) \neq \emptyset
\]
for all sufficiently large $j$.
So $G$ does not act properly discontinuously on $\mathcal{U}$.
\end{proof}

From this we deduce that $\Omega$ and $\Omega'$ are the only maximal open sets on which $G$ acts properly discontinuously.  
By our descriptions of $\Omega$ and $\Omega'$ we also have
\[ \Delta_G = \Omega \cap \Omega'.\]
It follows that $\Delta_G$ can be described purely in terms of the action of $G$ on $\PML(S)$, \textit{without referring to geometric structures on the surface}.

Though $\Delta_G$ may not be a \textit{maximal} open set on which $G$ acts nicely, it remains a \textit{canonically defined} one, and we pose the following question.

\begin{question}
If $G$ is an irreducible subgroup of $\Mod(S)$, is $\Delta_G$ is the intersection of all maximal open sets on which $G$ acts properly discontinuously?
\end{question}

\bibliographystyle{amsalpha}

\end{document}